\documentclass[letterpaper, 10 pt, conference]{ieeeconf}
\IEEEoverridecommandlockouts
\overrideIEEEmargins

\usepackage{enumitem}
\usepackage{balance}
\usepackage[font={small}]{caption}
\usepackage{subcaption}
\usepackage{array}
\usepackage{textcomp}
\usepackage{mathtools, nccmath}
\usepackage{graphicx}
\usepackage{amsfonts}
\usepackage{amsmath}
\usepackage{amssymb}
\usepackage{algorithm}
\usepackage{algorithmic}
\usepackage{hyperref}
\usepackage{tikz}
\usepackage{arydshln}
\usepackage{multirow}
\usepackage{bm}
\usepackage{epstopdf}
\usepackage{cite}
\usepackage{siunitx}


\usepackage{amsthm}



\newtheorem{theorem}{Theorem}
\newtheorem{lemma}{Lemma}

\newtheorem{remark}{Remark}
\theoremstyle{definition}
\newtheorem{definition}{Definition}
\newtheorem{problem}{Problem}

\title{\LARGE \bf
    Auxiliary-Variable Adaptive Control Lyapunov Barrier Functions for Spatio-Temporally Constrained Safety-Critical Applications}

\author{Shuo Liu$^{1}$, Wei Xiao$^{2}$ and Calin A. Belta$^{3}$
\thanks{This work was supported in part by the NSF under grant IIS-2024606 at Boston University.}
\thanks{$^{1}$S. Liu is with the department of Mechanical Engineering, Boston
University, Brookline, MA, USA. 
        {\tt\small liushuo@bu.edu}}%
\thanks{$^{2}$W. Xiao is with the Computer Science and Artificial Intelligence Lab, Massachusetts Institute of Technology, Cambridge, MA, USA 
        {\tt\small weixy@mit.edu}}%
\thanks{$^{3}$C. Belta is with the Department of Electrical and Computer Engineering and the Department of Computer Science, University of Maryland, College Park, MD, USA 
        {\tt\small cbelta@umd.edu}}%
}

\begin{document} 
\maketitle

\begin{abstract}
Recent work has shown that stabilizing an affine control system while optimizing a quadratic cost subject to state and control constraints can be mapped to a sequence of Quadratic Programs (QPs) using Control Barrier Functions (CBFs) and Control Lyapunov Functions (CLFs). One of the main challenges in this method is that the QPs could easily become infeasible under safety and spatio-temporal constraints with tight control bounds.  In our own recent work, we defined Auxiliary-Variable Adaptive CBFs (AVCBFs) to improve the feasibility of the CBF-based QP, while avoiding extensive parameter tuning. In this paper, we consider spatio-temporal constraints as finite-time reachability requirements. In order to satisfy these requirements, we generalize AVCBFs to Auxiliary-Variable Adaptive Control Lyapunov Barrier Functions (AVCLBFs) that work for systems and constraints with arbitrary relative
degrees. We show that our method has fewer conflicts with safety and input constraints, and outperforms the state of the art in term of adaptivity and feasibility in solving the QP. We illustrate our approach on an optimal control problem for a unicycle.
\end{abstract}

\section{Introduction}
\label{sec:Introduction}

Safety is one of the primary concerns in
the design and operation of autonomous systems. A considerable number of studies have incorporated safety directly into optimal control problems as constraints, employing Barrier Functions (BF) and Control Barrier Functions (CBF). BFs are Lyapunov-like functions \cite{tee2009barrier} whose use can be traced back to optimization problems \cite{boyd2004convex}. They have been utilized to prove set invariance \cite{aubin2011viability}, \cite{prajna2007framework} to derive multi-objective control \cite{panagou2013multi}, \cite{wang2016multi}, and to control multi-robot systems
\cite{glotfelter2017nonsmooth}. 

CBFs are extensions of BFs used to enforce safety, i.e., rendering a set forward invariant, for an affine control system. It was proved in \cite{ames2016control} that if a CBF for a safe set satisfies Lyapunov-like conditions, then this set is forward invariant and safety is guaranteed. It has also been shown that stabilizing an affine control system to admissible states, while minimizing a quadratic cost subject to state and control constraints, can be mapped to a sequence of Quadratic Programs (QPs) \cite{ames2016control} by unifying CBFs and Control Lyapunov Functions (CLFs) \cite{ames2012control}. 
In its initial formulation, this approach, which in this paper we will refer to as CBF-CLF-QP, is applicable exclusively to safety constraints that possess a relative degree of one. 
Exponential CBFs were developed to extend the methodology to encompass constraints with higher relative degrees \cite{nguyen2016exponential}. A broader category known as High-Order CBFs (HOCBFs) was introduced in \cite{xiao2021high}. The CBF-CLF-QP method has been widely used to enforce safety in many applications, including rehabilitative system control \cite{isaly2020zeroing}, adaptive cruise control \cite{ames2016control}, humanoid robot walking \cite{khazoom2022humanoid} and robot swarming \cite{cavorsi2022multi}. 

Recent works also employed CBFs or CLFs to enforce spatio-temporal specifications on the system trajectories. In \cite{lindemann2018control,liu2021recurrent,lindemann2019control,garg2019control}, Signal Temporal Logic (STL) was used as a specification language. Nevertheless, these papers only considered constraints with relative degree one. This limitation is not an issue for methods that use Recurrent Neural Network (RNN) controllers in \cite{liu2021recurrent}. However, training such controllers can be computationally very intensive. The input constraints were not considered at all in \cite{lindemann2018control}. Although they were taken into account in \cite{garg2019control,garg2022fixed,xiao2021high2} for constraints with relative degree one, and in \cite{xiao2021high2} for constraints with high relative degrees,
the CBF-CLF-QP approaches in these papers are likely to become infeasible due to conflicts between safety constraints, spatio-temporal constraints, and (tight) control bounds.
There are several approaches that aim to enhance the feasibility of the CBF-QP or CBF-CLF-QP based methods \cite{liu2023iterative,gurriet2018online,singletary2019online,chen2021backup,squires2018constructive,breeden2021high,xiao2021adaptive,liu2023auxiliary,xiao2022sufficient,liu2023feasibility}, However, these papers do not considere the improvement of feasibility under spatio-temporal constraints.

In this paper, the spatio-temporal constraints are considered as finite-time reachability constraints, whose objective is to guarantee the reachability of the states of a system to target states within a user-defined time (see \cite{garg2020prescribed}). To accommodate safety and finite-time reachability over nonlinear state constraints for high relative degree systems,
we propose Auxiliary-Variable Adaptive Control Lyapunov Barrier Functions
(AVCLBFs). Specifically, we formulate finite-time reachability as AVCLBF constraints with high relative degree. 
We show that AVCLBF constraints induce fewer conflicts with safety and input constraints than those in the works mentioned above, and thus the  feasibility of CBF-CLF-QP is enhanced. 
We demonstrate the effectiveness of the proposed method on a unicycle model with tight control bounds, and compare it to the state of the art. The results show that our proposed approach is able to enforce finite-time reachability in the presence of safety and control input constraints under various map configurations.

\section{Definitions and Preliminaries}
\label{sec:Preliminaries}

Consider an affine control system of the form
\begin{equation}
\label{eq:affine-control-system}
\dot{\boldsymbol{x}}=f(\boldsymbol{x})+g(\boldsymbol{x})\boldsymbol{u},
\end{equation}
 where $\boldsymbol{x}\in \mathbb{R}^{n}, f:\mathbb{R}^{n}\to\mathbb{R}^{n}$ and $g:\mathbb{R}^{n}\to\mathbb{R}^{n\times q}$ are locally Lipschitz, and $\boldsymbol{u}\in \mathcal U\subset \mathbb{R}^{q}$, where $\mathcal U$ denotes the control limitation set, which is assumed to be in the form: 
\begin{equation}
\label{eq:control-constraint}
\mathcal U \coloneqq \{\boldsymbol{u}\in \mathbb{R}^{q}:\boldsymbol{u}_{min}\le \boldsymbol{u} \le \boldsymbol{u}_{max} \}, 
\end{equation}
with $\boldsymbol{u}_{min},\boldsymbol{u}_{max}\in \mathbb{R}^{q}$ (vector inequalities are interpreted componentwise).
 
\begin{definition}[Class $\kappa$ and Extended Class $\kappa$ function~\cite{Khalil:1173048}]
\label{def:class-k-f}
A continuous function $\alpha: \mathbb{R} \to \mathbb{R}$ is called an extended class $\kappa$ function if it is strictly increasing and $\alpha(0)=0.$ If $\alpha: [0,a)\to[0,+\infty]$ strictly increases and $a>0,$ then it is class $\kappa$ function.
\end{definition}

\begin{definition}
\label{def:forward-inv}
A set $\mathcal C\subset \mathbb{R}^{n}$ is forward invariant for system \eqref{eq:affine-control-system} if its solutions for some $\boldsymbol{u} \in \mathcal U$ starting from any $\boldsymbol{x}(0) \in \mathcal C$ satisfy $\boldsymbol{x}(t) \in \mathcal C, \forall t \ge 0.$
\end{definition}

\begin{definition}
\label{def:relative-degree}
The relative degree of a differentiable function $b:\mathbb{R}^{n}\to\mathbb{R}$ is the minimum number of times we need to differentiate it along dynamics \eqref{eq:affine-control-system} until any component of $\boldsymbol{u}$ explicitly shows in the corresponding derivative. 
\end{definition}

In this paper,  a \textbf{safety requirement} is defined as $b(\boldsymbol{x})\ge0$, and \textbf{safety} is the forward invariance of the set $\mathcal C\coloneqq \{\boldsymbol{x}\in\mathbb{R}^{n}:b(\boldsymbol{x})\ge 0\}$. The relative degree of function $b$ is also referred to as the relative degree of safety requirement $b(\boldsymbol{x}) \ge 0$. 

\subsection{High-Order Control Barrier Functions (HOCBFs)}

For a requirement $b(\boldsymbol{x})\ge0$ with relative degree $m$ and $\psi_{0}(\boldsymbol{x})\coloneqq b(\boldsymbol{x}),$ we define a sequence of functions $\psi_{i}:\mathbb{R}^{n}\to\mathbb{R},\ i\in \{1,...,m\}$ as
\begin{equation}
\label{eq:sequence-f1}
\psi_{i}(\boldsymbol{x})\coloneqq\dot{\psi}_{i-1}(\boldsymbol{x})+\alpha_{i}(\psi_{i-1}(\boldsymbol{x})),\ i\in \{1,...,m\}, 
\end{equation}
where $\alpha_{i}(\cdot ),\ i\in \{1,...,m\}$ denotes a $(m-i)^{th}$ order differentiable class $\kappa$ function. We further set up a sequence of sets $\mathcal C_{i}$ based on \eqref{eq:sequence-f1} as
\begin{equation}
\label{eq:sequence-set1}
\mathcal C_{i}\coloneqq \{\boldsymbol{x}\in\mathbb{R}^{n}:\psi_{i}(\boldsymbol{x})\ge 0\}, \ i\in \{0,...,m-1\}. 
\end{equation}
\begin{definition}[HOCBF~\cite{xiao2021high}]
\label{def:HOCBF}
Let $\psi_{i}(\boldsymbol{x}),\ i\in \{1,...,m\}$ be defined by \eqref{eq:sequence-f1} and $\mathcal C_{i},\ i\in \{0,...,m-1\}$ be defined by \eqref{eq:sequence-set1}. A function $b:\mathbb{R}^{n}\to\mathbb{R}$ is a High-Order Control Barrier Function (HOCBF) with relative degree $m$ for system \eqref{eq:affine-control-system} if there exist $(m-i)^{th}$ order differentiable class $\kappa$ functions $\alpha_{i},\ i\in \{1,...,m\}$ such that
\begin{equation}
\label{eq:highest-HOCBF}
\begin{split}
\sup_{\boldsymbol{u}\in \mathcal U}[L_{f}^{m}b(\boldsymbol{x})+L_{g}L_{f}^{m-1}b(\boldsymbol{x})\boldsymbol{u}+O(b(\boldsymbol{x}))
+\\
\alpha_{m}(\psi_{m-1}(\boldsymbol{x}))]\ge 0,
\end{split}
\end{equation}
$\forall \boldsymbol{x}\in \mathcal C_{0}\cap,...,\cap \mathcal C_{m-1},$ where $L_{f}^{m}$ denotes the $m^{th}$ Lie derivative along $f$ and $L_{g}$ denotes the matrix of Lie derivatives along the columns of $g$; 
$O(\cdot)=\sum_{i=1}^{m-1}L_{f}^{i}(\alpha_{m-1}\circ\psi_{m-i-1})(\boldsymbol{x})$ contains the remaining Lie derivatives along $f$ with degree less than or equal to $m-1$. $\psi_{i}(\boldsymbol{x})\ge0$ is referred to as the $i^{th}$ order HOCBF inequality (constraint in optimization). We assume that $L_{g}L_{f}^{m-1}b(\boldsymbol{x})\boldsymbol{u}\ne0$ on the boundary of set $\mathcal C_{0}\cap\ldots\cap \mathcal C_{m-1}.$ 
\end{definition}

\begin{theorem}[Safety Guarantee~\cite{xiao2021high}]
\label{thm:safety-guarantee}
Given a HOCBF $b(\boldsymbol{x})$ from Def. \ref{def:HOCBF} with corresponding sets $\mathcal{C}_{0}, \dots,\mathcal {C}_{m-1}$ defined by \eqref{eq:sequence-set1}, if $\boldsymbol{x}(0) \in \mathcal {C}_{0}\cap \dots \cap \mathcal {C}_{m-1},$ then any Lipschitz controller $\boldsymbol{u}$ that satisfies the inequality in \eqref{eq:highest-HOCBF}, $\forall t\ge 0$ renders $\mathcal {C}_{0}\cap \dots \cap \mathcal {C}_{m-1}$ forward invariant for system \eqref{eq:affine-control-system}, $i.e., \boldsymbol{x} \in \mathcal {C}_{0}\cap \dots \cap \mathcal {C}_{m-1}, \forall t\ge 0.$
\end{theorem}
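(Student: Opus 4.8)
The plan is to prove forward invariance of the full intersection $\mathcal{C}_0 \cap \dots \cap \mathcal{C}_{m-1}$ by a \emph{downward induction} on the index $i$, propagating nonnegativity from $\psi_{m}$ all the way down to $\psi_0 = b$, with the scalar comparison lemma as the workhorse at each level. The first observation is that the supremum inequality \eqref{eq:highest-HOCBF} is, by construction of the sequence \eqref{eq:sequence-f1}, nothing but the statement $\psi_m(\boldsymbol{x}) \ge 0$: since $b$ has relative degree $m$, the control $\boldsymbol{u}$ appears explicitly only in $\dot{\psi}_{m-1} = L_f^m b(\boldsymbol{x}) + L_g L_f^{m-1} b(\boldsymbol{x})\,\boldsymbol{u} + O(b(\boldsymbol{x}))$, and adding $\alpha_m(\psi_{m-1}(\boldsymbol{x}))$ reproduces exactly the bracketed expression in \eqref{eq:highest-HOCBF}. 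Hence any Lipschitz controller $\boldsymbol{u}$ satisfying \eqref{eq:highest-HOCBF} for all $t\ge 0$ forces $\psi_m(\boldsymbol{x}(t)) \ge 0$ along the resulting closed-loop trajectory, which exists and is unique by the local Lipschitzness of $f$, $g$ and $\boldsymbol{u}$.

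The core tool is the following single-constraint invariance fact: if a locally Lipschitz scalar signal $\eta(t)$ satisfies $\dot{\eta}(t) \ge -\alpha(\eta(t))$ for a class $\kappa$ function $\alpha$ and $\eta(0) \ge 0$, then $\eta(t) \ge 0$ for all $t \ge 0$. I would establish this by the comparison lemma: letting $p(t)$ solve $\dot{p} = -\alpha(p)$ with $p(0) = \eta(0)$, the equilibrium $p = 0$ (recall $\alpha(0)=0$) together with uniqueness of solutions prevents $p$ from ever crossing zero, so $p(t) \ge 0$, and the comparison lemma gives $\eta(t) \ge p(t) \ge 0$. Applying this with $\eta = \psi_{m-1}$ and $\alpha = \alpha_m$ — the inequality $\dot{\psi}_{m-1} \ge -\alpha_m(\psi_{m-1})$ being exactly $\psi_m \ge 0$, and $\psi_{m-1}(\boldsymbol{x}(0)) \ge 0$ holding because $\boldsymbol{x}(0) \in \mathcal{C}_{m-1}$ — yields $\psi_{m-1}(\boldsymbol{x}(t)) \ge 0$ for all $t$, i.e. forward invariance of $\mathcal{C}_{m-1}$.

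This is the base of the induction, which now runs downward: assume $\psi_i(\boldsymbol{x}(t)) \ge 0$ for all $t \ge 0$. By the defining relation \eqref{eq:sequence-f1}, $\psi_i \ge 0$ is precisely the differential inequality $\dot{\psi}_{i-1} \ge -\alpha_i(\psi_{i-1})$; combined with $\psi_{i-1}(\boldsymbol{x}(0)) \ge 0$ (from $\boldsymbol{x}(0) \in \mathcal{C}_{i-1}$), the same comparison argument gives $\psi_{i-1}(\boldsymbol{x}(t)) \ge 0$ for all $t$. Iterating from $i = m-1$ down to $i = 1$ shows $\psi_i(\boldsymbol{x}(t)) \ge 0$ for every $i \in \{0,\dots,m-1\}$ and every $t \ge 0$, so each $\mathcal{C}_i$, and therefore their intersection, is forward invariant. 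The step I expect to require the most care is the comparison-lemma fact itself: class $\kappa$ functions are a priori defined only on $[0,a)$, so one must argue that $\eta$ never leaves this domain — the cleanest route is to note that at any first time $\eta$ reaches $0$ one has $\dot{\eta} \ge -\alpha(0) = 0$, which together with the comparison solution $p$ rules out a downward crossing; making this boundary behavior rigorous, rather than merely heuristic, is the only genuinely delicate point, the rest being bookkeeping along the chain \eqref{eq:sequence-f1}--\eqref{eq:sequence-set1}.
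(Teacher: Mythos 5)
Your proposal is correct and follows essentially the same route as the proof of this theorem in \cite{xiao2021high} (the present paper states the result without reproving it): read the bracketed expression in \eqref{eq:highest-HOCBF} as $\psi_m(\boldsymbol{x}(t))\ge 0$ along the closed-loop trajectory, then propagate nonnegativity down the chain \eqref{eq:sequence-f1} by repeated scalar comparison, exactly as you do. The only slip is your appeal to uniqueness of solutions of $\dot p=-\alpha(p)$, which can fail for a merely continuous class $\kappa$ function (e.g.\ $\alpha(r)=r^{1/2}$); the correct patch is the sign argument you sketch at the end --- extend $\alpha$ to an extended class $\kappa$ function, observe that $\dot p=-\alpha(p)>0$ whenever $p<0$ so no solution can cross zero downward, and invoke the comparison lemma in its maximal-solution form, which requires no Lipschitzness of $\alpha$.
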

\subsection{Auxiliary-Variable Adaptive Control Barrier Functions (AVCBFs)}
\label{subsec:pre-AVCBFs}
Central to AVCBFs is the introduction of several time-varying auxiliary variables that multiply corresponding CBFs, and defining dynamics for the
auxiliary variables to adapt them in constructing the corresponding constraints.
{\small Define} $m$ time-varying auxiliary variables $a_{1}(t),\dots,a_{m}(t)$ and $m$ auxiliary systems in the form 
\begin{equation}
\label{eq:virtual-system}
\dot{\boldsymbol{\pi}}_{i}=F_{i}(\boldsymbol{\pi}_{i})+G_{i}(\boldsymbol{\pi}_{i})\nu_{i}, i \in \{1,...,m\},
\end{equation}
where $\boldsymbol{\pi}_{i}(t)\coloneqq [\pi_{i,1}(t),\dots,\pi_{i,m+1-i}(t)]^{T}\in \mathbb{R}^{m+1-i}$ denotes an auxiliary state with $\pi_{i,j}(t)\in \mathbb{R}, j \in \{1,...,m+1-i\}.$ $\nu_{i}\in \mathbb{R}$ denotes an auxiliary input for \eqref{eq:virtual-system}, $F_{i}:\mathbb{R}^{m+1-i}\to\mathbb{R}^{m+1-i}$ and $G_{i}:\mathbb{R}^{m+1-i}\to\mathbb{R}^{m+1-i}$ are locally Lipschitz. One simple way to build up connection between auxiliary variables and auxiliary systems is to make $a_{i}(t)=\pi_{i,1}(t), \dot{\pi}_{i,1}(t)=\pi_{i,2}(t),\dots,\dot{\pi}_{i,m-i}(t)=\pi_{i,m+1-i}(t)$ and $\dot{\pi}_{i,m+1-i}(t)=\nu_{i}.$ Given a HOCBF $a_{i}:\mathbb{R} \to \mathbb{R}$  with relative degree $m+1-i$ for \eqref{eq:virtual-system}, we can define a sequence of functions $\varphi_{i,j}:\mathbb{R}^{m+1-i}\to\mathbb{R}, i \in\{1,...,m\}, j \in\{1,...,m+1-i\}:$
\begin{equation}
\label{eq:virtual-HOCBFs}
\varphi_{i,j}(\boldsymbol{\pi}_{i})\coloneqq\dot{\varphi}_{i,j-1}(\boldsymbol{\pi}_{i})+\alpha_{i,j}(\varphi_{i,j-1}(\boldsymbol{\pi}_{i})),
\end{equation}
where $\varphi_{i,0}(\boldsymbol{\pi}_{i})\coloneqq a_{i}(t),$ $\alpha_{i,j}(\cdot)$ are $(m+1-i-j)^{th}$ order differentiable class $\kappa$ functions. Each $a_{i}(t)$ can be controlled to be positive
according to the method in \cite{liu2023auxiliary} stemming from Theorem \ref{thm:safety-guarantee}.

Let $\boldsymbol{\Pi}(t)\coloneqq [\boldsymbol{\pi}_{1}(t),\dots,\boldsymbol{\pi}_{m}(t)]^{T}$ and $\boldsymbol{\nu}\coloneqq [\nu_{1},\dots,\nu_{m}]^{T}$ denote the auxiliary states and control inputs of system \eqref{eq:virtual-system}. Consider a sequence of functions 
\begin{small}
\begin{equation}
\label{eq:AVBCBF-sequence}
\begin{split}
&\psi_{0}(\boldsymbol{x},\boldsymbol{\Pi}(t))\coloneqq a_{1}(t)b(\boldsymbol{x}),\\
&\psi_{i}(\boldsymbol{x},\boldsymbol{\Pi}(t))\coloneqq a_{i+1}(t)(\dot{\psi}_{i-1}(\boldsymbol{x},\boldsymbol{\Pi}(t))+\alpha_{i}(\psi_{i-1}(\boldsymbol{x},\boldsymbol{\Pi}(t)))),
\end{split}
\end{equation}
\end{small}
where $i \in \{1,...,m-1\}, \psi_{m}(\boldsymbol{x},\boldsymbol{\Pi}(t))\coloneqq \dot{\psi}_{m-1}(\boldsymbol{x},\boldsymbol{\Pi}(t))+\alpha_{m}(\psi_{m-1}(\boldsymbol{x},\boldsymbol{\Pi}(t))).$ Define a sequence of sets $\mathcal{B}_{i}$ associated with \eqref{eq:AVBCBF-sequence} in the form 
\begin{small}
\begin{equation}
\label{eq:AVBCBF-set}
\begin{split}
\mathcal B_{i}\coloneqq \{(\boldsymbol{x},\boldsymbol{\Pi}(t)) \in \mathbb{R}^{n} \times \mathbb{R}^{m}:\psi_{i}(\boldsymbol{x},\boldsymbol{\Pi}(t))\ge 0\}, 
\end{split}
\end{equation}
\end{small}
where $i \in \{0,...,m-1\}$. A constraint set $\mathcal{U}_{\boldsymbol{a}}$ for $\boldsymbol{\nu}$ can be defined as 
\begin{small}
\begin{equation}
\label{eq:constraint-up}
\begin{split}
\mathcal{U}_{\boldsymbol{a}}(\boldsymbol{\Pi})\coloneqq \{\boldsymbol{\nu}\in\mathbb{R}^{m}:   L_{F_{i}}^{m+1-i}a_{i}+[L_{G_{i}}L_{F_{i}}^{m-i}a_{i}]\nu_{i}\\
+O_{i}(a_{i})+ \alpha_{i,m+1-i}(\varphi_{i,m-i}(a_{i})) \ge\epsilon, i\in \{1,\dots,m\}\},
\end{split}
\end{equation}
\end{small}
where $\varphi_{i,m-i}(\cdot)$ is similar to \eqref{eq:virtual-HOCBFs}. $O_{i}(\cdot)=\sum_{j=1}^{m-i}L_{F_{i}}^{j}(\alpha_{i,m-i}\circ\varphi_{i,m-1-i})(t) $ where $\circ$ denotes the composition of functions. $\epsilon$ is a positive constant which can be infinitely small (see \cite{liu2023auxiliary}).
\begin{definition}[AVCBF~\cite{liu2023auxiliary}]
\label{def:AVBCBF}
Let $\psi_{i}(\boldsymbol{x},\boldsymbol{\Pi}(t)),\ i\in \{1,...,m\}$ be defined by \eqref{eq:AVBCBF-sequence} and $\mathcal B_{i},\ i\in \{0,...,m-1\}$ be defined by \eqref{eq:AVBCBF-set}. A function $b(\boldsymbol{x}):\mathbb{R}^{n}\to\mathbb{R}$ is an Auxiliary-Variable Adaptive Control Barrier Function (AVCBF) with relative degree $m$ for system \eqref{eq:affine-control-system} if every $a_{i}(t),i\in \{1,...,m\}$ is a
HOCBF with relative degree $m+1-i$ for the auxiliary system
\eqref{eq:virtual-system}, and there exist $(m-j)^{th}$ order differentiable class $\kappa$ functions $\alpha_{j},j\in \{1,...,m-1\}$
and a class $\kappa$ functions $\alpha_{m}$ s.t.
\begin{small}
\begin{equation}
\label{eq:highest-AVBCBF}
\begin{split}
\sup_{\boldsymbol{u}\in \mathcal{U},\boldsymbol{\nu}\in \mathcal{U}_{\boldsymbol{a}}}[\sum_{j=2}^{m-1}[(\prod_{k=j+1}^{m}a_{k})\frac{\psi_{j-1}}{a_{j}}\nu_{j}] + \frac{\psi_{m-1}}{a_{m}}\nu_{m} \\ +(\prod_{i=2}^{m}a_{i})b(\boldsymbol{x})\nu_{1} +(\prod_{i=1}^{m}a_{i})(L_{f}^{m}b(\boldsymbol{x})+L_{g}L_{f}^{m-1}b(\boldsymbol{x})\boldsymbol{u})\\+R(b(\boldsymbol{x}),\boldsymbol{\Pi})
+ \alpha_{m}(\psi_{m-1})] \ge 0,
\end{split}
\end{equation}
\end{small}
$\forall (\boldsymbol{x},\boldsymbol{\Pi})\in \mathcal B_{0}\cap,...,\cap \mathcal B_{m-1}$ and each $a_{i}>0, i\in\{1,\dots,m\}.$ In \eqref{eq:highest-AVBCBF}, $R(b(\boldsymbol{x}),\boldsymbol{\Pi})$ denotes the remaining Lie derivative terms of $b(\boldsymbol{x})$ (or $\boldsymbol{\Pi}$) along $f$ (or $F_{i},i\in\{1,\dots,m\}$) with degree less than $m$ (or $m+1-i$), which is similar to the form of $O(\cdot )$ in \eqref{eq:constraint-up}.
\end{definition}

\begin{theorem}[Safety Guarantee with Auxiliary Variables~\cite{liu2023auxiliary}]
\label{thm:safety-guarantee-3}
Given an AVCBF $b(\boldsymbol{x})$ from Def. \ref{def:AVBCBF} with corresponding sets $\mathcal{B}_{0}, \dots,\mathcal {B}_{m-1}$ defined by \eqref{eq:AVBCBF-set}, if $(\boldsymbol{x}(0),\boldsymbol{\Pi}(0)) \in \mathcal {B}_{0}\cap \dots \cap \mathcal {B}_{m-1},$ then if there exists solution of Lipschitz controller $(\boldsymbol{u},\boldsymbol{\nu})$ that satisfies the constraint in \eqref{eq:highest-AVBCBF} and also ensures $(\boldsymbol{x},\boldsymbol{\Pi})\in \mathcal {B}_{m-1}$ for all $t\ge 0,$ then $\mathcal {B}_{0}\cap \dots \cap \mathcal {B}_{m-1}$ will be rendered forward invariant for system \eqref{eq:affine-control-system}, $i.e., (\boldsymbol{x},\boldsymbol{\Pi}) \in \mathcal {B}_{0}\cap \dots \cap \mathcal {B}_{m-1}, \forall t\ge 0.$ Moreover, $b(\boldsymbol{x})\ge 0$ is ensured for all $t\ge 0.$
\end{theorem}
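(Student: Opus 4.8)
The plan is to establish the two claims---forward invariance of $\mathcal{B}_{0}\cap\cdots\cap\mathcal{B}_{m-1}$ and $b(\boldsymbol{x})\ge 0$---by a downward induction on the index of the sequence $\psi_{0},\dots,\psi_{m}$ in \eqref{eq:AVBCBF-sequence}, mirroring the HOCBF argument behind Theorem \ref{thm:safety-guarantee} but adapted to the multiplicative auxiliary-variable structure. The one ingredient that makes this adaptation nontrivial is that each step divides out an auxiliary variable $a_{i+1}(t)$, so I must first guarantee that these variables never vanish or change sign.

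First I would secure the strict positivity of the auxiliary variables. By Def. \ref{def:AVBCBF} every $a_{i}(t)$ is a HOCBF of relative degree $m+1-i$ for the auxiliary system \eqref{eq:virtual-system}, and the admissible auxiliary-input set $\mathcal{U}_{\boldsymbol{a}}$ in \eqref{eq:constraint-up} enforces the highest-order HOCBF inequality with a strict margin $\epsilon>0$. Applying Theorem \ref{thm:safety-guarantee} to \eqref{eq:virtual-system} then renders the corresponding sublevel sets forward invariant, so that $a_{i}(t)>0$ for all $t\ge 0$ and all $i\in\{1,\dots,m\}$, given $a_{i}(0)>0$.

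Next I would run the induction. The hypothesis supplies a Lipschitz $(\boldsymbol{u},\boldsymbol{\nu})$ that satisfies \eqref{eq:highest-AVBCBF} and keeps $(\boldsymbol{x},\boldsymbol{\Pi})\in\mathcal{B}_{m-1}$, that is, $\psi_{m-1}\ge 0$ for all $t$; indeed \eqref{eq:highest-AVBCBF} is exactly $\psi_{m}=\dot{\psi}_{m-1}+\alpha_{m}(\psi_{m-1})\ge 0$, which together with $\psi_{m-1}(0)\ge 0$ yields this maintenance through the comparison lemma. For the inductive step, suppose $\psi_{i}\ge 0$ for all $t$ with $i\in\{1,\dots,m-1\}$. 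From \eqref{eq:AVBCBF-sequence}, $\psi_{i}=a_{i+1}\bigl(\dot{\psi}_{i-1}+\alpha_{i}(\psi_{i-1})\bigr)$, and because $a_{i+1}>0$ this gives $\dot{\psi}_{i-1}\ge -\alpha_{i}(\psi_{i-1})$. Since $(\boldsymbol{x}(0),\boldsymbol{\Pi}(0))\in\mathcal{B}_{i-1}$ means $\psi_{i-1}(0)\ge 0$ and $\alpha_{i}$ is class $\kappa$, the comparison lemma gives $\psi_{i-1}(t)\ge 0$ for all $t\ge 0$, i.e. $(\boldsymbol{x},\boldsymbol{\Pi})\in\mathcal{B}_{i-1}$. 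Iterating from $i=m-1$ down to $i=1$ shows each $\mathcal{B}_{i}$ is rendered forward invariant, hence so is the intersection $\mathcal{B}_{0}\cap\cdots\cap\mathcal{B}_{m-1}$. Finally, the bottom of the chain reads $\psi_{0}=a_{1}b(\boldsymbol{x})\ge 0$, and dividing by $a_{1}>0$ delivers $b(\boldsymbol{x})\ge 0$ for all $t\ge 0$.

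The hard part will be the comparison-lemma step rather than the algebra: I must argue that the reduced inequality $\dot{\psi}_{i-1}\ge -\alpha_{i}(\psi_{i-1})$ holds along the closed-loop trajectory for all $t$, which hinges on (i) the strict positivity of $a_{i+1}$ established above, so that multiplying or dividing by it preserves the sign of the bracketed term, and (ii) the standing Lipschitz and existence assumption ruling out finite escape, so that solutions extend to all $t\ge 0$. Care is also needed because each $\psi_{i}$ depends on the time-varying $\boldsymbol{\Pi}(t)$ as well as $\boldsymbol{x}$, so the comparison argument must be applied to the scalar time function $t\mapsto\psi_{i-1}(\boldsymbol{x}(t),\boldsymbol{\Pi}(t))$ rather than to a purely state-dependent barrier.
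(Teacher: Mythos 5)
Your proposal is correct and follows essentially the same route as the proof this paper defers to in \cite{liu2023auxiliary}: strict positivity of each $a_{i}(t)$ via the HOCBF conditions with the $\epsilon$-margin in \eqref{eq:constraint-up}, then a top-down comparison-lemma induction through the chain \eqref{eq:AVBCBF-sequence}, dividing out the positive $a_{i+1}$ at each step until $\psi_{0}=a_{1}b(\boldsymbol{x})\ge 0$ yields $b(\boldsymbol{x})\ge 0$. Your side observation that membership in $\mathcal{B}_{m-1}$ is derivable from \eqref{eq:highest-AVBCBF} is harmless here since the theorem assumes it anyway (the reference keeps it as an explicit hypothesis, which also sidesteps the fact that the class $\kappa$ function $\alpha_{m}$ need not be defined for negative arguments).
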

Note that if auxiliary variables $a_{1}(t)=a_{2}(t)=\dots=a_{m}(t)=1,$ the constraint \eqref{eq:highest-AVBCBF} and sets \eqref{eq:AVBCBF-set} are the same as the constraint \eqref{eq:highest-HOCBF} and sets \eqref{eq:sequence-set1} respectively, which means HOCBF is a special version of AVCBF.

Several studies \cite{nguyen2016exponential},\cite{xiao2021high} integrate HOCBFs \eqref{eq:highest-HOCBF} with quadratic costs in systems with high relative degrees, creating safety-critical optimization problems that ensure safety by maintaining the forward invariance of safety-related sets. Control Lyapunov Functions (CLFs) are added to achieve exponential state convergence (see \cite{xiao2021high},\cite{xiao2021adaptive}), while approaches similar to CBF-CLF can secure finite-time convergence (see \cite{lindemann2018control,xiao2021high2,garg2019control}). In these studies, control inputs serve as decision variables in an optimization problem that is solved over discretized time intervals. Constraints from CBFs and CLFs are included, fixing the state value at each interval's start, thus creating a QP problem. The optimal control, determined by solving the QP, is applied consistently throughout the interval, with state updates based on dynamics \eqref{eq:affine-control-system}. This method, which throughout this paper we will referred to as the CBF-CLF-QP method, operates under the assumption that it is feasible to solve the QP problem at each time interval.


\section{Problem Formulation and Approach}
\label{sec:Problem Formulation and Approach}


Our objective is to generate a control strategy for system \eqref{eq:affine-control-system}, aiming for reachability to a target state within a user-defined time $T$. This strategy also seeks to minimize energy consumption, ensure compliance with safety requirement, and adhere to constraints on control inputs \eqref{eq:control-constraint}.

\textbf{Finite-Time Reachability Requirement:} The states of system \eqref{eq:affine-control-system} should reach a closed set given as $S \coloneqq \{\boldsymbol{x}\in\mathbb{R}^{n}:h(\boldsymbol{x})\le 0\}$ for a given function $h:\mathbb{R}^{n}\to\mathbb{R}$ in a user-defined interval $T,$ i.e., given $\boldsymbol{x}(0)\in\mathbb{R}^{n}, \exists t_{r} \in [0, T] \Longrightarrow \boldsymbol{x}(t_{r})\in S.$ We assume that given initial state $\boldsymbol{x}(0),$ there always exist closed-loop trajectories $\boldsymbol{x}(t)$ to satisfy the finite-time reachability requirement.


\textbf{Safety Requirement:} System \eqref{eq:affine-control-system} should always satisfy a safety requirement of the form: 
\begin{equation}
\label{eq:Safety constraint}
b(\boldsymbol{x}(t))\ge 0, \boldsymbol{x} \in \mathbb{R}^{n}, \forall t \in [0, t_{r}],
\end{equation}
where $b:\mathbb{R}^{n}\to\mathbb{R}$ is assumed to be a continuously differentiable function.

\textbf{Control Limitation Requirement:} The controller $\boldsymbol{u}$ should always satisfy \eqref{eq:control-constraint} for all $t \in [0, t_{r}].$

\textbf{Objective:} We consider the cost  
\begin{equation}
\label{eq:cost-function-1}
\begin{split}
 J(\boldsymbol{u}(t))=\int_{0}^{t_{r}} 
 D(\left \| \boldsymbol{u}(t) \right \|)dt,
\end{split}
\end{equation}
where $\left \| \cdot \right \|$ denotes the 2-norm of a vector, $D(\cdot)$ is a strictly increasing function of its argument.

A control policy is \textbf{feasible} if all constraints derived from previously mentioned requirements are satisfied and mutually non-conflicting during period $[0, t_{r}].$ In this paper, we consider the following problem:

\begin{problem}
\label{prob:SACC-prob}
Find a feasible control policy for system \eqref{eq:affine-control-system} such that the previously mentioned requirements are satisfied and cost \eqref{eq:cost-function-1} is minimized.
\end{problem}

As mentioned at the end of Sec. \ref{sec:Preliminaries}, several approaches exist for the case when the cost in (\ref{eq:cost-function-1}) 
is quadratic and the required convergence is in infinite time. In most of these works, the feasibility of solving the QP in every interval cannot be assured and is, in reality, improbable, especially when the control bounds in Eqn. \eqref{eq:control-constraint} are tight. Moreover, achieving finite-time reachability is significantly more complex than merely ensuring the eventual reachability of states. This complexity implies that there will be fewer solutions available in the whole state space. Given that the solutions to the CBF-CLF-QP method reside within this state space, solving the QP problem becomes more prone to infeasibility.

To address these limitations, our approach to Problem \ref{prob:SACC-prob} proceeds as follows. We define a HOCBF to enforce \eqref{eq:Safety constraint}. Since the constraints related to AVCBFs are relaxed by auxiliary
inputs and this relaxation still ensures the forward invariance
of sets, which increases the overall feasibility of solving QPs, we generalize AVCBFs to Auxiliary-Variable Adaptive Control Lyapunov Barrier Functions (AVCLBFs) in Sec. \ref{sec:AVCLFBs}. Specifically, we design an AVCLBF and associated sets to achieve the finite-time reachability requirement. If the forward invariance of sets associated with AVCLBF can be guaranteed, then the finite-time reachability requirement is assured. Consequently, we identify a feasible optimal control that does not conflict with safety and input constraints.
 
\section{Auxiliary-Variable Adaptive Control Lyapunov Barrier Functions}
\label{sec:AVCLFBs}

In this section, we begin with a simple example that illustrates our proposed approach. 

\subsection{Motivating Example}
Consider a simplified unicycle model expressed as 
\begin{small}
\begin{equation}
\label{eq:UM-dynamics}
\underbrace{\begin{bmatrix}
\dot{x}(t) \\
\dot{y}(t) \\
\dot{\theta}(t)
\end{bmatrix}}_{\dot{\boldsymbol{x}}(t)}  
=\underbrace{\begin{bmatrix}
 v\cos{(\theta(t))}  \\
 v\sin{(\theta(t))} \\
 0
\end{bmatrix}}_{f(\boldsymbol{x}(t))} 
+ \underbrace{\begin{bmatrix}
  0 \\
  0 \\
  1 
\end{bmatrix}}_{g(\boldsymbol{x}(t))}u(t),
\end{equation}
\end{small}
where $(x, y)$ denote the coordinates of the unicycle, $v>0$ is its linear speed (assumed constant), $\theta$ denotes the heading angle, and $u$ represents the angular speed. We require the unicycle to visit a disk at some finite time $t_{r}$, i.e., $\exists t_{r} \in [0, T]$ such that $(x(t_{r})-x_{0})^{2}+(y(t_{r})-y_{0})^{2}-r^2\le 0$, where $(x_{0}, y_{0})$ and $r$ denote the center location and radius of the disk, respectively. We can define a CLF like function $V(\boldsymbol{x})\coloneqq (x-x_{0})^{2}+(y-y_{0})^{2}-r^2$, and then enforce the derived inequality
\begin{small}
\begin{equation}
\label{eq:CLF constraint}
\psi_{1}(\boldsymbol{x})\coloneqq \dot{V}(\boldsymbol{x})+c_{1}V(\boldsymbol{x})^{\frac{1}{3}}\le0,
\end{equation}
\end{small}
where $c_{1}>0$ determines the convergence rate. From \eqref{eq:CLF constraint}, we have
\begin{small}
\begin{equation}
\label{eq:CLF constraint2}
V(\boldsymbol{x}(t))\le \left [V(\boldsymbol{x}(0))^{\frac{2}{3}} -\frac{2}{3}c_{1}t \right ]^{\frac{3}{2}}, \text{if } V(\boldsymbol{x}(t))\ge0.
\end{equation}
\end{small}
Assume $T=10s$. If $ c_{1}=\frac{3V(\boldsymbol{x}(0))^{\frac{2}{3}}}{20},$ based on \eqref{eq:CLF constraint2} we have $V(\boldsymbol{x}(10))\le 0,$ which means the value of $V(\boldsymbol{x})$ will converge to 0 at or before 10 seconds. If $ c_{1}>\frac{3V(\boldsymbol{x}(0))^{\frac{2}{3}}}{20},$ the value of $V(\boldsymbol{x})$ will converge to 0 before 10 seconds. However, since the function of $V(\boldsymbol{x})$ has relative degree 2 for system \eqref{eq:UM-dynamics} and the control input does not appear in \eqref{eq:CLF constraint}, we cannot directly apply inequality \eqref{eq:CLF constraint} as a constraint under QPs.
The authors of \cite{xiao2021high2} proposed High Order Control Lyapunov Barrier Functions (HOCLBFs) which can be used to extend $V(\boldsymbol{x})$ into higher order function, e.g., $\psi_{2}(\boldsymbol{x},\boldsymbol{u})\coloneqq \dot{\psi}_{1}(\boldsymbol{x})+c_{2}\psi_{1}(\boldsymbol{x})^{\frac{1}{3}}.$ One issue with this method is that the value of $\psi_{1}(\boldsymbol{x}(0))$ might not be negative initially. If $c_{2}=\frac{3\psi_{1}(\boldsymbol{x}(0))^{\frac{2}{3}}}{2t_{2}},$ it will take at most $t=t_{2}$ to have $\psi_{1}(\boldsymbol{x})$ converge to 0, then at most $t=t_{1}$ to have $V(\boldsymbol{x})$ converge to 0, given $c_{1}=\frac{3V(\boldsymbol{x}(t_{2}))^{\frac{2}{3}}}{2t_{1}}.$ The parameters related to time should be selected first to satisfy $t_{1}+t_{2} \le 10s,$ then based on $t_{1},t_{2}$, parameters related to convergence rate should be reversely selected, i.e., determine $c_{2}$ first, then after $t_{2}$ determine $c_{1}.$ This leads to some parameters not being determined initially (like $c_{1}$ in above case), thereby posing difficulties for global planning. Another issue is the parameters $c_{1},c_{2}$ are not time-varying, which will significantly reduce the number of solutions available in the whole state space, thus resulting in frequent infeasibility. To address these issues, we introduce AVCLBFs next.
\subsection{Auxiliary-Variable Adaptive Control Lyapunov Barrier Functions}
For finite-time reachability requirement, consider a function $h(\boldsymbol{x})$ in the form
\begin{equation}
\label{eq:Convergence hx}
h(\boldsymbol{x})\coloneqq \left \| \boldsymbol{x}(t)-\boldsymbol{x}_{e} \right \|_{p}-\delta_{e},
\end{equation}
where $\left \| \cdot \right \|_{p}$ denotes the $p$-norm of a vector, $\boldsymbol{x}_{e} \in \mathbb{R}^{n}$ is a desired state vector, and $\delta_{e}>0$ denotes permissible reachability tolerance. We assume the initial value of $h(\boldsymbol{x})$ is positive as $h(\boldsymbol{x}(0))>0$ and the relative degree of $h(\boldsymbol{x})$ with respect to \eqref{eq:affine-control-system} is $m, m\ge 2.$ Motivated by \eqref{eq:virtual-system} and \eqref{eq:AVBCBF-sequence}, we first define $m-1$ time-varying auxiliary variables $a_{1}(t),\dots,a_{m-1}(t)$ and $m-1$ auxiliary systems, then define a sequence of functions $\varphi_{i,j}:\mathbb{R}^{m-i}\to\mathbb{R}, i \in\{1,...,m-1\}, j \in\{1,...,m-i\}$ similar to \eqref{eq:virtual-HOCBFs} to ensure each $a_{i}(t)$ is positive. A function $\psi_{0}$ is then defined as 
\begin{small}
\begin{equation}
\label{eq:original CLF}
\psi_{0}(\boldsymbol{x},\boldsymbol{\Pi}(t))\coloneqq a_{1}(t)(-\dot{h}(\boldsymbol{x})-c(t)\beta(h(\boldsymbol{x}))),
\end{equation}
\end{small}
where $c(t)$ is a time-varying variable and $\beta(\cdot)$ is an extended class $\kappa$ function. We notice the number of auxiliary variables $a_{i}(t)$ here is $m-1$, one less compared to $m$ in Sec. \ref{subsec:pre-AVCBFs} since the relative degree of $\psi_{0}(\boldsymbol{x},\boldsymbol{\Pi}(t))$ here with respect to \eqref{eq:affine-control-system} is $m-1.$ We define a sequence of functions $\psi_{i}:\mathbb{R}^{n}\to\mathbb{R},\ i\in \{1,...,m-2\}$ as
\begin{small}
\begin{equation}
\label{eq:sequence-f1-2}
\begin{split}
\psi_{i}(\boldsymbol{x},\boldsymbol{\Pi}(t))\coloneqq a_{i+1}(t)(\dot{\psi}_{i-1}(\boldsymbol{x},\boldsymbol{\Pi}(t))+\alpha_{i}(\psi_{i-1}(\boldsymbol{x},\boldsymbol{\Pi}(t)))), \\
\psi_{m-1}(\boldsymbol{x},\boldsymbol{\Pi}(t))\coloneqq \dot{\psi}_{m-2}(\boldsymbol{x},\boldsymbol{\Pi}(t))+\alpha_{m-1}(\psi_{m-2}(\boldsymbol{x},\boldsymbol{\Pi}(t))),
\end{split}
\end{equation}
\end{small}
where $i \in \{1,...,m-2\},$ $\boldsymbol{\Pi}(t)\coloneqq [\boldsymbol{\pi}_{1}(t),\dots,\boldsymbol{\pi}_{m-1}(t)]^{T}$ and $\boldsymbol{\nu}\coloneqq [\nu_{1},\dots,\nu_{m-1}]^{T}$ denote the auxiliary states and control inputs of $m-1$ auxiliary systems defined in \eqref{eq:virtual-system}. A sequence of sets $\mathcal{B}_{i}$ ($i \in \{0,...,m-2\}$) associated with \eqref{eq:sequence-f1-2} and constraint set $\mathcal{U}_{\boldsymbol{a}}$ for $\boldsymbol{\nu}$ are defined corresponding to \eqref{eq:AVBCBF-set} and \eqref{eq:constraint-up} respectively.
\begin{definition}[Auxiliary-Variable Adaptive Control Lyapunov Barrier Function (AVCLBF)]
\label{def:AVCLBF}
Let $\psi_{i}(\boldsymbol{x},\boldsymbol{\Pi}(t)),\ i\in \{0,...,m-1\}$ be defined by  \eqref{eq:original CLF}, \eqref{eq:sequence-f1-2} and $\mathcal B_{i},\ i\in \{0,...,m-2\}$ be defined by \eqref{eq:AVBCBF-set}. A function $h(\boldsymbol{x}):\mathbb{R}^{n}\to\mathbb{R}$ is an Auxiliary-Variable Adaptive Control Lyapunov Barrier Function (AVCLBF) with relative degree $m$ ($m\ge2$) for system \eqref{eq:affine-control-system} if every $a_{i}(t),i\in \{1,...,m-1\}$ is a
HOCBF with relative degree $m-i$ for the auxiliary system
\eqref{eq:virtual-system}, and there exist $(m-1-j)^{th}$ order differentiable class $\kappa$ functions $\alpha_{j},j\in \{1,...,m-2\}$
and a class $\kappa$ function $\alpha_{m-1}$ s.t.
\begin{small}
\begin{equation}
\label{eq:highest-AVCLBF}
\begin{split}
\sup_{\boldsymbol{u}\in \mathcal{U},\boldsymbol{\nu}\in \mathcal{U}_{\boldsymbol{a}}}[\sum_{j=2}^{m-2}[(\prod_{k=j+1}^{m-1}a_{k})\frac{\psi_{j-1}}{a_{j}}\nu_{j}] + \frac{\psi_{m-2}}{a_{m-1}}\nu_{m-1} \\ +(\prod_{i=2}^{m-1}a_{i})\psi_{0}\nu_{1} +(\prod_{i=1}^{m-1}a_{i})(L_{f}^{m-1}\psi_{0}+L_{g}L_{f}^{m-2}\psi_{0}\boldsymbol{u})\\+R(\psi_{0},\boldsymbol{\Pi})
+ \alpha_{m-1}(\psi_{m-2})] \ge 0,
\end{split}
\end{equation}
\end{small}
for all $(\boldsymbol{x},\boldsymbol{\Pi})\in \mathcal B_{0}\cap,...,\cap \mathcal B_{m-2}$ and each $a_{i}>0, i\in\{1,\dots,m-1\}.$ In \eqref{eq:highest-AVCLBF}, $R(\psi_{0},\boldsymbol{\Pi})$ denotes the remaining Lie derivative terms of $\psi_{0}$ (or $\boldsymbol{\Pi}$) along $f$ (or $F_{i},i\in\{1,\dots,m-1\}$) with degree less than $m-1$ (or $m-i$), which is similar to the form of $O(\cdot )$ in \eqref{eq:constraint-up}.
\end{definition}

\begin{theorem}
\label{thm:safety-guarantee-4}
Given an AVCLBF $h(\boldsymbol{x})$ from Def. \ref{def:AVCLBF} with corresponding sets $\mathcal{B}_{0}, \dots,\mathcal {B}_{m-2}$ defined by \eqref{eq:AVBCBF-set}, if $(\boldsymbol{x}(0),\boldsymbol{\Pi}(0)) \in \mathcal {B}_{0}\cap \dots \cap \mathcal {B}_{m-2},$ then if there exists a Lipschitz controller $(\boldsymbol{u},\boldsymbol{\nu})$ that satisfies the constraint in \eqref{eq:highest-AVCLBF} and also ensures $(\boldsymbol{x},\boldsymbol{\Pi})\in \mathcal {B}_{m-2}$ for all $t\ge 0,$ then $\mathcal {B}_{0}\cap \dots \cap \mathcal {B}_{m-2}$ will be rendered forward invariant for system \eqref{eq:affine-control-system}, $i.e., (\boldsymbol{x},\boldsymbol{\Pi}) \in \mathcal {B}_{0}\cap \dots \cap \mathcal {B}_{m-2}, \forall t\ge 0.$ Moreover, $\frac{\psi_{0}(\boldsymbol{x},\boldsymbol{\Pi}(t))}{a_{1}(t)}\coloneqq -\dot{h}(\boldsymbol{x})-c(t)\beta(h(\boldsymbol{x})) \ge 0$ is ensured for all $t\ge 0.$
\end{theorem}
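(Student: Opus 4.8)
The plan is to adapt the proof of Theorem~\ref{thm:safety-guarantee-3} to the present setting, in which the base function is $-\dot{h}(\boldsymbol{x})-c(t)\beta(h(\boldsymbol{x}))$ of relative degree $m-1$ rather than $b(\boldsymbol{x})$ of relative degree $m$. The first thing I would establish is that the bracketed expression in \eqref{eq:highest-AVCLBF} is exactly the expanded form of $\psi_{m-1}=\dot{\psi}_{m-2}+\alpha_{m-1}(\psi_{m-2})$: differentiating the nested definition \eqref{eq:sequence-f1-2} along the system dynamics \eqref{eq:affine-control-system} produces the single control-dependent term $(\prod_{i=1}^{m-1}a_i)(L_f^{m-1}\psi_0+L_gL_f^{m-2}\psi_0\,\boldsymbol{u})$, differentiating along the auxiliary dynamics \eqref{eq:virtual-system} produces the terms linear in $\nu_1,\dots,\nu_{m-1}$ with the displayed coefficients, and all lower-order Lie derivatives collapse into $R(\psi_0,\boldsymbol{\Pi})$. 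Consequently, a controller $(\boldsymbol{u},\boldsymbol{\nu})$ satisfies \eqref{eq:highest-AVCLBF} if and only if it enforces $\psi_{m-1}(\boldsymbol{x},\boldsymbol{\Pi}(t))\ge 0$ along the closed-loop trajectory.

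I would next record the positivity of the auxiliary variables. Since each $a_i$ is a HOCBF of relative degree $m-i$ for its auxiliary system \eqref{eq:virtual-system} and $\boldsymbol{\nu}$ is confined to $\mathcal{U}_{\boldsymbol{a}}$ through \eqref{eq:constraint-up}, Theorem~\ref{thm:safety-guarantee} applied to the auxiliary systems keeps $a_i(t)>0$ for all $t\ge 0$ and all $i\in\{1,\dots,m-1\}$; this strict positivity is what lets me divide by the $a_i$ in the cascade. At the top level, $\psi_{m-1}\ge 0$ reads $\dot{\psi}_{m-2}\ge-\alpha_{m-1}(\psi_{m-2})$, so with $\psi_{m-2}(0)\ge 0$ (from $(\boldsymbol{x}(0),\boldsymbol{\Pi}(0))\in\mathcal{B}_{m-2}$) and $\alpha_{m-1}(0)=0$ the comparison argument underlying Theorem~\ref{thm:safety-guarantee} maintains $\psi_{m-2}(t)\ge 0$, in agreement with the standing hypothesis $(\boldsymbol{x},\boldsymbol{\Pi})\in\mathcal{B}_{m-2}$.

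I would then run the backward cascade for $i=m-2,\dots,1$ with inductive hypothesis $\psi_i(t)\ge 0$. By \eqref{eq:sequence-f1-2}, $\psi_i=a_{i+1}(\dot{\psi}_{i-1}+\alpha_i(\psi_{i-1}))$, and since $a_{i+1}>0$ this gives $\dot{\psi}_{i-1}+\alpha_i(\psi_{i-1})\ge 0$. Because $\alpha_i(0)=0$, on the boundary $\psi_{i-1}=0$ we have $\dot{\psi}_{i-1}\ge 0$, so with $\psi_{i-1}(0)\ge 0$ the comparison lemma gives $\psi_{i-1}(t)\ge 0$, i.e.\ $(\boldsymbol{x},\boldsymbol{\Pi})\in\mathcal{B}_{i-1}$ for all $t\ge 0$. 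Completing the induction establishes forward invariance of $\mathcal{B}_0\cap\dots\cap\mathcal{B}_{m-2}$ and, in particular, $\psi_0(t)\ge 0$. Dividing by $a_1(t)>0$ and using \eqref{eq:original CLF} then yields $\tfrac{\psi_0}{a_1}=-\dot{h}(\boldsymbol{x})-c(t)\beta(h(\boldsymbol{x}))\ge 0$ for all $t\ge 0$, which is the remaining claim.

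The step I expect to be the main obstacle is the opening identification: verifying term by term that the expansion of $\dot{\psi}_{m-2}+\alpha_{m-1}(\psi_{m-2})$ reproduces \eqref{eq:highest-AVCLBF}. The bookkeeping is delicate because each differentiation of the product structure in \eqref{eq:sequence-f1-2} generates the partial products $\prod_{k=j+1}^{m-1}a_k$ and the $\nu_j$-coefficients $\tfrac{\psi_{j-1}}{a_j}$, and one must isolate the single control-dependent term from the remainder $R(\psi_0,\boldsymbol{\Pi})$ while carrying the relative-degree shift from $m$ to $m-1$ correctly. Once this identification is in place, the positivity and comparison-lemma steps are routine adaptations of the AVCBF argument proving Theorem~\ref{thm:safety-guarantee-3}.
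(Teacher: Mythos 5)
Your proposal is correct and takes essentially the same route as the paper, whose ``proof'' is simply a deferral to the argument for Theorem~\ref{thm:safety-guarantee-3} in \cite{liu2023auxiliary}: identifying the bracketed expression in \eqref{eq:highest-AVCLBF} with the expansion of $\psi_{m-1}\ge 0$, securing $a_{i}(t)>0$ through the HOCBF constraints embedded in $\mathcal{U}_{\boldsymbol{a}}$, and running the backward comparison-lemma cascade down to $\psi_{0}\ge 0$, with the relative degree shifted from $m$ to $m-1$ exactly as you describe. The only cosmetic difference is that the theorem takes $(\boldsymbol{x},\boldsymbol{\Pi})\in\mathcal{B}_{m-2}$ for all $t\ge 0$ as an explicit hypothesis rather than deriving it from the top-level constraint, and your comparison-lemma derivation of that fact is consistent with the stated assumption.
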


The proof of the above theorem is similar to the proof of Thm. \ref{thm:safety-guarantee-3} (see \cite{liu2023auxiliary}). Based on Thm. \ref{thm:safety-guarantee-4}, since $\psi_{0}(\boldsymbol{x},\boldsymbol{\Pi}(t))\ge 0$ is ensured, the function regarding $\frac{\psi_{0}(\boldsymbol{x},\boldsymbol{\Pi}(t))}{a_{1}(t)}\coloneqq -\dot{h}(\boldsymbol{x})-c(t)\beta(h(\boldsymbol{x})) \ge 0$ is guaranteed.
\begin{remark}
\label{rem:m bigger than 2} 
Some comments are in order on our assumption that the relative degree of $h(\boldsymbol{x})$ for system \eqref{eq:affine-control-system} is higher than or equal to 2 in Def. \ref{def:AVCLBF}. If $m=1,$ the highest order equation becomes $\psi_{0}(\boldsymbol{x},\boldsymbol{u})\coloneqq-\dot{h}(\boldsymbol{x})-c(t)\beta(h(\boldsymbol{x})),$ which means auxiliary variable ${a_{i}(t)}$ will not be introduced and an adjustable parameter associate with $\psi_{0}(\boldsymbol{x},\boldsymbol{u})$ is $c(t).$ We can also introduce a relaxation variable $w$ in this case as $-\dot{h}(\boldsymbol{x})-c(t)\beta(h(\boldsymbol{x}))\ge w$ to increases the feasibility of solving QPs.
\end{remark}
We consider the power function as a general form
of extended class $\kappa$ function as $\beta(h(\boldsymbol{x}))\coloneqq h(\boldsymbol{x})^{q}, q>0.$ Based on this, we have:
\begin{lemma}
\label{lemma:state convergence}
Given a function $h(\boldsymbol{x}),$ if the next inequality is satisfied:
\begin{equation}
\label{eq:state convergence}
\dot{h}(\boldsymbol{x})+c(t)h(\boldsymbol{x})^{q}\le0, \forall t\ge0,
\end{equation}
with $q\in (0,1)$ and $h(\boldsymbol{x}(0))>0,$ then there exists an upper bound for $h(\boldsymbol{x}),$ and the time at which this upper bound reaches 0 is $T$, where $C_{\int}(t)=\int_0^{t} c(\tau)d\tau$ and $C_{\int}(T)=C_{\int}(0)+\frac{h(\boldsymbol{x}(0))^{1-q}}{1-q}.$
\end{lemma}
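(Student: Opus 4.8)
The plan is to turn the differential inequality \eqref{eq:state convergence} into a separable form that can be integrated directly, and then to read off both the pointwise upper bound on $h$ and the finishing time $T$. Since $h(\boldsymbol{x}(0))>0$ and trajectories are continuous, there is an initial interval on which $h(\boldsymbol{x}(t))>0$; on such an interval $h(\boldsymbol{x})^{q}>0$ and we may divide \eqref{eq:state convergence} through by $h(\boldsymbol{x})^{q}$.

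Writing $H(t)\coloneqq h(\boldsymbol{x}(t))$ for brevity, the first step is to observe that
\begin{equation}
\frac{d}{dt}\frac{H(t)^{1-q}}{1-q}=H(t)^{-q}\dot{H}(t)=\frac{\dot{h}(\boldsymbol{x})}{h(\boldsymbol{x})^{q}}\le -c(t),
\end{equation}
where the inequality is exactly \eqref{eq:state convergence} divided by $h(\boldsymbol{x})^{q}$. Integrating this from $0$ to $t$ and using $C_{\int}(t)=\int_{0}^{t}c(\tau)d\tau$ with $C_{\int}(0)=0$ gives
\begin{equation}
\frac{H(t)^{1-q}}{1-q}-\frac{H(0)^{1-q}}{1-q}\le -C_{\int}(t),
\end{equation}
and since $1-q>0$, raising to the power $\frac{1}{1-q}$ yields the upper bound
\begin{equation}
h(\boldsymbol{x}(t))\le\left[h(\boldsymbol{x}(0))^{1-q}-(1-q)C_{\int}(t)\right]^{\frac{1}{1-q}},
\end{equation}
valid as long as the bracketed quantity is nonnegative.

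The finishing time is then obtained by setting the upper bound to zero: the right-hand side vanishes precisely when $h(\boldsymbol{x}(0))^{1-q}-(1-q)C_{\int}(T)=0$, i.e. $C_{\int}(T)=\frac{h(\boldsymbol{x}(0))^{1-q}}{1-q}$, which is the claimed identity $C_{\int}(T)=C_{\int}(0)+\frac{h(\boldsymbol{x}(0))^{1-q}}{1-q}$ because $C_{\int}(0)=0$. Since $h$ is squeezed below this decreasing bound, $h(\boldsymbol{x}(T))\le 0$, so the upper bound reaches $0$ at $T$.

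The main point requiring care is that the division by $h(\boldsymbol{x})^{q}$ and the resulting bound are only justified while $h$ stays positive, and that $h^{q}$ is not Lipschitz at $h=0$ for $q\in(0,1)$, so uniqueness of the underlying scalar dynamics cannot be invoked naively. The clean way to handle this is a comparison argument: introduce the comparison system $\dot{y}=-c(t)y^{q}$ with $y(0)=h(\boldsymbol{x}(0))$, whose solution is exactly the bracketed expression above, and show via the comparison lemma that $h(\boldsymbol{x}(t))\le y(t)$ on the maximal interval where $y>0$. This confirms that $h$ cannot escape the bound before $T$ and that the bound itself hits $0$ at $T$, completing the argument.
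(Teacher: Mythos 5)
Your proposal is correct and follows essentially the same route as the paper: both derive the explicit expression $\left[h(\boldsymbol{x}(0))^{1-q}-(1-q)\bigl(C_{\int}(t)-C_{\int}(0)\bigr)\right]^{\frac{1}{1-q}}$ from the separable comparison dynamics $\dot{y}=-c(t)y^{q}$ and then invoke the comparison lemma of Khalil to conclude $h(\boldsymbol{x}(t))\le y(t)$, reading off $T$ from where the bound vanishes. Your explicit attention to the division by $h^{q}$ and the non-Lipschitz behavior of $y^{q}$ at $y=0$ is a welcome refinement of the paper's terser argument, but it is the same proof in substance.
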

\begin{proof}
Based on $\dot{h} (\boldsymbol{x})+c(t)h(\boldsymbol{x})^{q}=0, C_{\int}(t)=\int_0^{t} c(\tau)d\tau$ and $q\in (0,1)$, we have
\begin{small}
\begin{equation}
\label{eq:state convergence time}
h(\boldsymbol{x})\coloneqq \left [h(\boldsymbol{x}(0))^{1-q}-(1-q)(C_{\int}(t)-C_{\int}(0))\right ]^{\frac{1}{1-q}},
\end{equation}
\end{small}
where $h(\boldsymbol{x})\ge 0.$ In \eqref{eq:state convergence time}, the function $h(\boldsymbol{x}(0))^{1-q}-(1-q)(C_{\int}(t)-C_{\int}(0))$ will reach 0 at time $T$ if $C_{\int}(T)=C_{\int}(0)+\frac{h(\boldsymbol{x}(0))^{1-q}}{1-q}.$ 
Based on the comparison lemma in \cite{Khalil:1173048},
since \eqref{eq:state convergence},\eqref{eq:state convergence time} hold, we have
\begin{small}
\begin{equation}
\label{eq:state convergence time2}
h(\boldsymbol{x})\le \left [h(\boldsymbol{x}(0))^{1-q}-(1-q)(C_{\int}(t)-C_{\int}(0))\right ]^{\frac{1}{1-q}},
\end{equation}
\end{small}
where $h(\boldsymbol{x})\ge 0.$ Thus, the upper bound of $h(\boldsymbol{x})$ will reach 0 at $T$ where $C_{\int}(T)=C_{\int}(0)+\frac{h(\boldsymbol{x}(0))^{1-q}}{1-q}.$ $C_{\int}(T)$ is called the critical value for finite-time reachability.
\end{proof}

\begin{remark}
\label{rem:convergence to 0 guarantee} 
Note that Eqns. \eqref{eq:state convergence time}, \eqref{eq:state convergence time2} are correct only when $h(\boldsymbol{x})\ge 0$ is satisfied. Since the upper bound of $h(\boldsymbol{x})$ will reach 0 at $T$ based on Lemma. \ref{lemma:state convergence}, it is possible that $h(\boldsymbol{x})$ reaches 0 then becomes negative at $t_{r}$ where $t_{r}<T.$ Typically, we consider that the requirement of finite-time reachability has been completed under such circumstances. If we want to keep $h(\boldsymbol{x})$ negative during period $t\in [t_{r},T],$ we should rewrite \eqref{eq:state convergence time2} into
\begin{equation}
\begin{small}
\label{eq:state convergence time special}
h(\boldsymbol{x})\le \begin{cases}
 \left [h(\boldsymbol{x}(t_{r}))^{1-q}-(1-q)(C_{\int}(t)-C_{\int}(t_{r}))\right ]^{\frac{1}{1-q}}, (a), \\
 -\left [h(\boldsymbol{x}(t_{r}))^{1-q}-(1-q)(C_{\int}(t)-C_{\int}(t_{r}))\right ]^{\frac{1}{1-q}}, (b),
\end{cases}
\end{small}
\end{equation}
where $h(\boldsymbol{x}(t_{r}))^{1-q}<0$ for $(a),$ $h(\boldsymbol{x}(t_{r}))^{1-q}>0$ for $(b), \forall t\in [t_{r},T].$ If $h(\boldsymbol{x}(t_{r}))^{1-q}$ is an imaginary number, reset the $q$ at $t=0$ to make $h(\boldsymbol{x}(t_{r}))^{1-q}$ a real number. As long as inequality $C_{\int}(t)\ge C_{\int}(t_{r})+\frac{h(\boldsymbol{x}(t_{r}))^{1-q}}{1-q}$ for $(a)$ or $C_{\int}(t)\le C_{\int}(t_{r})+\frac{h(\boldsymbol{x}(t_{r}))^{1-q}}{1-q}$ for $(b)$ satisfies $\forall t\in [t_{r},T],$ based on \eqref{eq:state convergence time special}, $h(\boldsymbol{x})\le0$ will satisfy during period $t\in [t_{r},T].$ We don't need to prevent chattering behavior by switching classes of CLBFs (see \cite{xiao2021high2}), we just need to adjust $C_{\int}(t)$ to meet the corresponding inequalities, which avoids extensive parameter tuning.
\end{remark}

\begin{remark}
\label{rem:range of c} 
If $q =1$ in Lemma \ref{lemma:state convergence}, based on $\dot{h} (\boldsymbol{x})+c(t)h(\boldsymbol{x})^{q}=0$ and $ C_{\int}(t)=\int c(t)dt,$ we have 
\begin{equation}
\label{eq:state convergence time wrong1}
h(\boldsymbol{x})\coloneqq h(\boldsymbol{x}(0))e^{(C_{\int}(0)-C_{\int}(t))}.
\end{equation}
If $q >1$ in lemma. \ref{lemma:state convergence}, we have 
\begin{equation}
\label{eq:state convergence time wrong2}
h(\boldsymbol{x})\coloneqq \left [h(\boldsymbol{x}(0))^{1-q}-(1-q)(C_{\int}(t)-C_{\int}(0))\right ]^{\frac{1}{1-q}},
\end{equation}
where $h(\boldsymbol{x})\ge0.$ For \eqref{eq:state convergence time wrong1},\eqref{eq:state convergence time wrong2}, As $C_{\int}(t)$ approaches $+\infty,$ $h(\boldsymbol{x})$ nears 0, meaning the upper bound of $h(\boldsymbol{x})$ will never hit 0 within finite time. Thus, using the comparison lemma in \cite{Khalil:1173048}, finite-time reachability cannot be assured. The range of $q$ is limited to $(0, 1).$
 \end{remark}

\begin{remark}
The benefit of making $c(t)$ a time-varying variable is shown in Fig. \ref{fig:flexibility of c}. Based on Lemma \ref{lemma:state convergence}, at time $t_{r}$ the upper bound of $h(\boldsymbol{x})$ reaches 0 where $t_{r}\le T, C_{\int}(t)=\int_0^{t} c(\tau)d\tau$ and $C_{\int}(t_{r})=C_{\int}(0)+\frac{h(\boldsymbol{x}(0))^{1-q}}{1-q}.$ This reveals the value of $C_{\int}(t)$ should reach $C_{\int}(0)+\frac{h(\boldsymbol{x}(0))^{1-q}}{1-q}$ no later than time $T$ in order to satisfy the finite-time reachability requirement. Moreover, $C_{\int}(0)$ can be any value. Curve $(a)$ shows linear growth of $C_{\int}(t)$, while curves $(b)$ and $(c)$ depict nonlinear growth. This variety in $C_{\int}(t)$ offers more solutions for \eqref{eq:state convergence}, enabling flexible, adaptive control strategies across the state space and enhancing the feasibility of solving QPs.
\label{rem:flexibility of c} 
\begin{figure}[ht]
    \centering
    \includegraphics[scale=0.36]{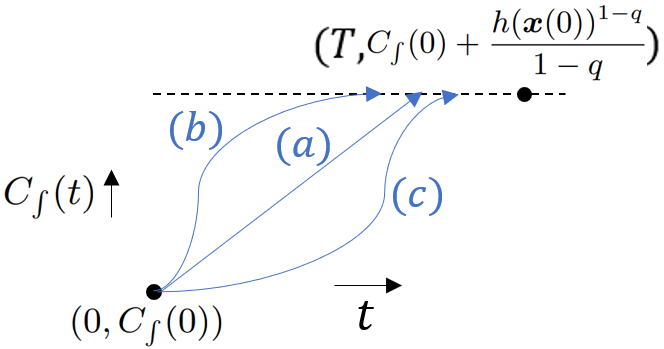}
    \caption{Schematic diagram of the changes in $C_{\int}(t)$ over time. $(0,C_{\int}(0))$ are the initial values of $t$ and $C_{\int}(t),$ respectively. $(T,C_{\int}(0)+\frac{h(\boldsymbol{x}(0))^{1-q}}{1-q})$ are the user-defined time and critical value for finite-time reachability, respectively. The three blue solid curves denote different choices regarding equation $C_{\int}(t)$ that meet the reachability requirement.}
    \label{fig:flexibility of c}
\end{figure} 
\end{remark}
\begin{theorem}
\label{thm:convergence-guarantee}
Given an AVCLBF $h(\boldsymbol{x})$ stated in Def. \ref{def:AVCLBF} and Lemma. \ref{lemma:state convergence} with $h(\boldsymbol{x}(0))>0, q\in(0,1)$, if $(\boldsymbol{x}(0),\boldsymbol{\Pi}(0)) \in \mathcal {B}_{0}\cap \dots \cap \mathcal {B}_{m-2},$ any controller $\boldsymbol{u}\in \mathcal{U}$ ($\boldsymbol{\nu}\in \mathcal{U}_{\boldsymbol{a}}$) that satisfies \eqref{eq:highest-AVCLBF} makes $h(\boldsymbol{x})$ reach 0 within time
\begin{small}
\begin{equation}
\label{eq:time upper bound}
T=C_{\int}^{-1}(C_{\int}(0)+\frac{h(\boldsymbol{x}(0))^{1-q}}{1-q}),    
\end{equation}
\end{small}
where $C_{\int}^{-1}(\cdot)$ denotes the inverse function of $C_{\int}(t).$
\end{theorem}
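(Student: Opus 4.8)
The plan is to chain together the two results already in hand: the constraint-satisfaction guarantee of Theorem~\ref{thm:safety-guarantee-4} and the convergence-rate bound of Lemma~\ref{lemma:state convergence}. First I would verify that the hypotheses of Theorem~\ref{thm:safety-guarantee-4} are met verbatim: $(\boldsymbol{x}(0),\boldsymbol{\Pi}(0)) \in \mathcal B_0 \cap \dots \cap \mathcal B_{m-2}$ holds by assumption, and the controller $(\boldsymbol{u},\boldsymbol{\nu})$ satisfies the highest-order inequality \eqref{eq:highest-AVCLBF}. Invoking that theorem yields the pointwise inequality $\frac{\psi_0(\boldsymbol{x},\boldsymbol{\Pi}(t))}{a_1(t)} = -\dot{h}(\boldsymbol{x}) - c(t)\beta(h(\boldsymbol{x})) \ge 0$ for all $t \ge 0$, so the entire AVCLBF construction collapses, for the purpose of this argument, to a single scalar differential inequality on $h$.

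Next I would specialize to the power-function choice $\beta(h(\boldsymbol{x})) = h(\boldsymbol{x})^q$ with $q \in (0,1)$, which is admissible as an extended class $\kappa$ function. Rearranging the inequality from the previous step gives exactly $\dot{h}(\boldsymbol{x}) + c(t)h(\boldsymbol{x})^q \le 0$ for all $t \ge 0$, which is the hypothesis \eqref{eq:state convergence} of Lemma~\ref{lemma:state convergence}. Combined with $h(\boldsymbol{x}(0)) > 0$, the lemma applies directly and produces the explicit upper bound \eqref{eq:state convergence time2}, whose argument vanishes precisely when $C_{\int}(t) = C_{\int}(0) + \frac{h(\boldsymbol{x}(0))^{1-q}}{1-q}$.

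Finally, since $c(t) > 0$ renders $C_{\int}(t) = \int_0^t c(\tau)\,d\tau$ strictly increasing and hence invertible, the time at which that upper bound reaches zero is $T = C_{\int}^{-1}\!\left(C_{\int}(0) + \frac{h(\boldsymbol{x}(0))^{1-q}}{1-q}\right)$, which is the claimed expression. Because $h(\boldsymbol{x})$ is dominated by this upper bound via the comparison lemma of \cite{Khalil:1173048}, $h(\boldsymbol{x})$ itself must reach $0$ no later than $T$.

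The subtlety I would be most careful about is the validity window of the comparison argument: as Remark~\ref{rem:convergence to 0 guarantee} stresses, the closed-form bound and the comparison lemma are meaningful only while $h(\boldsymbol{x}) \ge 0$. I would therefore carry out the squeeze on the interval $[0,t_r]$, where $t_r$ is the first instant at which $h(\boldsymbol{x}) = 0$; the differential inequality holds throughout this interval, the comparison lemma applies, and domination by the upper bound forces $t_r \le T$. Reachability of the target set $S = \{\boldsymbol{x} : h(\boldsymbol{x}) \le 0\}$ is achieved at $t_r$, so the evolution of $h$ beyond $t_r$ does not affect the statement.
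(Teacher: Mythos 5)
Your proposal is correct and follows essentially the same route as the paper's own proof: invoke Theorem~\ref{thm:safety-guarantee-4} to obtain $\dot{h}(\boldsymbol{x})+c(t)h(\boldsymbol{x})^{q}\le 0$ from $\psi_{0}\ge 0$ with the power-function choice $\beta(h)=h^{q}$, then apply Lemma~\ref{lemma:state convergence} to conclude the upper bound on $h$ vanishes at $T=C_{\int}^{-1}(C_{\int}(0)+\frac{h(\boldsymbol{x}(0))^{1-q}}{1-q})$. Your added care about the validity window $[0,t_r]$ (which the paper defers to Remark~\ref{rem:convergence to 0 guarantee}) and about invertibility of $C_{\int}$ is welcome, though note the paper never actually assumes $c(t)>0$ pointwise --- its own simulations use $c(t)=5t-0.2$ --- so strictly one should only require that $C_{\int}$ attain the critical value $C_{\int}(0)+\frac{h(\boldsymbol{x}(0))^{1-q}}{1-q}$ at some finite time, rather than monotonicity.
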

\begin{proof}
Since $h(\boldsymbol{x})$ is an AVCLBF stated in Def. \ref{def:AVCLBF} and Lemma. \ref{lemma:state convergence}, Eqn. \eqref{eq:original CLF} becomes
\begin{small}
\begin{equation}
\label{eq:power function}
\psi_{0}(\boldsymbol{x},\boldsymbol{\Pi}(t))\coloneqq a_{1}(t)(-\dot{h}(\boldsymbol{x})-c(t)h(\boldsymbol{x})^{q}).
\end{equation}
\end{small}
Based on Thm. \ref{thm:safety-guarantee-4}, since $\psi_{0}(\boldsymbol{x},\boldsymbol{\Pi}(t))\ge0$ is guaranteed for $t\ge0,$ we have 
\begin{small}
\begin{equation}
\label{eq:state convergence inequality}
-\frac{\psi_{0}(\boldsymbol{x},\boldsymbol{\Pi}(t))}{a_{1}(t)}=\dot{h}(\boldsymbol{x})+c(t)h(\boldsymbol{x})^{q}\le 0, \forall t\ge0.
\end{equation}
\end{small}
Based on Lemma \ref{lemma:state convergence}, the upper bound of $h(\boldsymbol{x})$ will be 0 at $T=C_{\int}^{-1}(C_{\int}(0)+\frac{h(\boldsymbol{x}(0))^{1-q}}{1-q}),$ thus the value of $h(x)$ will reach 0 before or right on $T.$
\end{proof}

\section{Case Study and Simulations}
\label{sec:Case Study and Simulations}

In this section, we consider the unicycle model with the dynamics given by \eqref{eq:UM-dynamics2} for Prob. \ref{prob:SACC-prob}, which is more realistic than the simplified unicycle model in Sec. \ref{sec:AVCLFBs} and in the case study introduced in \cite{xiao2021high2}.  
\begin{small}
\begin{equation}
\label{eq:UM-dynamics2}
\underbrace{\begin{bmatrix}
\dot{x}(t) \\
\dot{y}(t) \\
\dot{\theta}(t)\\
\dot{v}(t)
\end{bmatrix}}_{\dot{\boldsymbol{x}}(t)}  
=\underbrace{\begin{bmatrix}
 v(t)\cos{(\theta(t))}  \\
 v(t)\sin{(\theta(t))} \\
 0 \\
 0
\end{bmatrix}}_{f(\boldsymbol{x}(t))} 
+ \underbrace{\begin{bmatrix}
  0 & 0\\
  0 & 0\\
  1 & 0\\
  0 & 1
\end{bmatrix}}_{g(\boldsymbol{x}(t))}\underbrace{\begin{bmatrix}
   u_{1}(t)   \\
  u_{2}(t) 
\end{bmatrix}}_{\boldsymbol{u}(t)}
\end{equation}
\end{small}
In \eqref{eq:UM-dynamics2}, $(x, y)$ denote the coordinates of the unicycle, $v$ is its linear speed, $\theta$ denotes the heading angle, and $\boldsymbol{u}$ represent the angular velocity ($u_{1}$) and the linear acceleration ($u_{2}$), respectively. 
The objective is to minimize the control effort $\min_{\boldsymbol{u}(t)} \int_{0}^{t_{r}}  
 \boldsymbol{u}(t)^{T}\boldsymbol{u}(t)dt.$
\subsection{Safety Requirement}
For the safety requirement, we consider the case when the robot has to avoid circular obstacles. The candidate HOCBF $b(\boldsymbol{x})$ is defined based on a quadratic distance function $b_{i}(\boldsymbol{x})= (x-x_{i,o})^{2}+(y-y_{i,o})^{2}-r_{i,o}^{2}$, where $(x_{i,o},y_{i,o})$ and $r_{i,o}$ denote the $i^{th}$ obstacle center location and radius, respectively. The HOCBFs are then defined as
\begin{small}
\begin{equation}
\label{eq:HOCBF-sequence-UM2}
\begin{split}
&\psi_{i,0}(\boldsymbol{x})\coloneqq b_{i}(\boldsymbol{x}),\\
&\psi_{i,1}(\boldsymbol{x})\coloneqq L_{f}b_{i}(\boldsymbol{x})+k_{1}b_{i}(\boldsymbol{x}),\\
&\psi_{i,2}(\boldsymbol{x},\boldsymbol{u})\coloneqq L_{f}^{2}b_{i}(\boldsymbol{x})+L_{f}L_{g}b_{i}(\boldsymbol{x})\boldsymbol{u}+\\
&k_{1}L_{f}b_{i}(\boldsymbol{x})+k_{2}\psi_{i,1}(\boldsymbol{x}),
\end{split}
\end{equation}
\end{small}
where $\alpha_{i,1}(\cdot),\alpha_{i,2}(\cdot)$ are set as linear functions and $k_{1}>0, k_{2}>0.$ Note that if we want the robot to stay inside a safe circular area decided by center location $(x_{i,o},y_{i,o})$ and radius $r_{i,o}$, the candidate HOCBF should be defined as $b(\boldsymbol{x})= r_{i,o}^{2}-(x-x_{i,o})^{2}-(y-y_{i,o})^{2}.$ 

\subsection{Finite-Time Reachability Requirement}
For finite-time reachability requirement, we consider a unicycle robot tries to reach circular areas within a desired time $T$. The candidate AVCLBF $h(\boldsymbol{x})$ is defined based on a quadratic distance function $h_{i}(\boldsymbol{x})= (x-x_{i,d})^{2}+(y-y_{i,d})^{2}-r_{i,d}^{2}$, where $(x_{i,d},y_{i,d})$ and $r_{i,d}$ denote the $i^{th}$ area center location and radius, respectively. Motivated by Sec. \ref{sec:AVCLFBs}, we define an auxiliary dynamic as
\begin{small}
\begin{equation}
\label{eq:Auxiliary-dynamics1}
\underbrace{
\dot{a_{1}}(t)
}_{\dot{\boldsymbol{\pi}}_{1}(t)}  
=\underbrace{
 0
}_{F_{1}(\boldsymbol{{\pi}}_{1}(t))} 
+ \underbrace{
  1 
}_{G_{1}(\boldsymbol{{\pi}}_{1}(t))}\nu_{1}(t).
\end{equation}
\end{small}
The HOCBFs for $a_{1}(t)$ are defined as
\begin{small}
\begin{equation}
\label{eq:SHOCBF-sequence-UM2}
\begin{split}
&\varphi_{0}(\boldsymbol{{\pi}}_{1}(t))\coloneqq a_{1}(t),\\
&\varphi_{1}(\boldsymbol{{\pi}}_{1}(t))\coloneqq \dot{\varphi}_{0}(\boldsymbol{{\pi}}_{1}(t))+l_{1}\varphi_{0}(\boldsymbol{{\pi}}_{1}(t)),
\end{split}
\end{equation}
\end{small}
where $\alpha_{1}(\cdot)$ is set as a linear function and $l_{1}>0.$ Note that to ensure $a_{1}(t)>0,$ the inequality $\varphi_{1}(\boldsymbol{{\pi}}_{1}(t))\ge\epsilon$ must be satisfied (see \eqref{eq:constraint-up}). The AVCLBFs are then defined as
\begin{small}
\begin{equation}
\label{eq:AVCLBF-sequence-UM2}
\begin{split}
&\psi_{i,0}(\boldsymbol{x},\boldsymbol{{\pi}}_{1}(t))\coloneqq a_{1}(t)(-\dot{h}_{i}(\boldsymbol{x})-c(t)h_{i}(\boldsymbol{x})^{q}),\\
&\psi_{i,1}(\boldsymbol{x},\boldsymbol{{\pi}}_{1}(t))\coloneqq \dot{\psi}_{i,0}(\boldsymbol{x},\boldsymbol{{\pi}}_{1}(t))+l_{2}\psi_{i,0}(\boldsymbol{x},\boldsymbol{{\pi}}_{1}(t)),
\end{split}
\end{equation}
\end{small}
where $\alpha_{i,1}(\cdot)$ and $\beta_{i}(\cdot)$ are set as a linear function and a power function respectively with $l_{2}>0, q\in(0,1).$
\subsection{Complete Cost Function for AVCLBF-HOCBF-QP}
By formulating the constraints from HOCBFs \eqref{eq:HOCBF-sequence-UM2},\eqref{eq:SHOCBF-sequence-UM2}, AVCLBFs \eqref{eq:AVCLBF-sequence-UM2}, and control limitations \eqref{eq:control-constraint}, we can define the cost function 
 for QP as
\begin{small}
\begin{equation}
\label{eq:AVCLBF-HOCBF-QP}
\begin{split}
\min_{\boldsymbol{u}(t),\nu_{1}(t)} \int_{0}^{T}[\boldsymbol{u}(t)^{T}\boldsymbol{u}(t)+W_{1}(\nu_{1}(t)-a_{1,w})^{2}]dt,
\end{split}
\end{equation}
\end{small}
where $W_{1}$ is a positive scalar and $a_{1,w}\in \mathbb{R}$ is the scalar to which we hope the auxiliary input $\nu_{1}$ converges.
\subsection{Benchmarks for Finite-Time Reachability}
To showcase AVCLBFs' advantage in achieving finite-time reachability, we compare its performance against two benchmarks, including HOCLBFs as proposed in \cite{xiao2021high2}. To align with HOCLBFs' format, we define:
\begin{small}
\begin{equation}
\label{eq:HOCLBF-sequence-UM2}
\begin{split}
&h_{b1}(\boldsymbol{x})= r_{d}^{2}-(x-x_{d})^{2}-(y-y_{d})^{2},\\
&\psi_{1}(\boldsymbol{x})\coloneqq \dot{h}_{b1}(\boldsymbol{x})+c_{1}h_{b1}(\boldsymbol{x})^{q_{1}},\\
&\psi_{2}(\boldsymbol{x})\coloneqq \dot{\psi}_{1}(\boldsymbol{x})+c_{2}\psi_{1}(\boldsymbol{x})^{q_{2}},
\end{split}
\end{equation}
\end{small}
where $\alpha_{1}(\cdot),\alpha_{2}(\cdot)$ are set as power functions and $c_{1}>0, c_{2}>0, q_{1}\in(0,1),q_{2}\in(0,1).$ 
Another benchmark is based on time-varying CBFs proposed in \cite{lindemann2018control}. To match the format of time-varying CBFs, we define:
\begin{small}
\begin{equation}
\label{eq:TVCBF-sequence-UM2}
\begin{split}
&h_{b2}(\boldsymbol{x})= r_{d_{0}}^{2}-\frac{r_{d_{0}}^{2}-r_{d_{T}}^{2}}{T}t- (x-x_{d})^{2}-(y-y_{d})^{2},\\
&\psi_{1}(\boldsymbol{x})\coloneqq \dot{h}_{b1}(\boldsymbol{x})+l_{1}h_{b1}(\boldsymbol{x}),\\
&\psi_{2}(\boldsymbol{x})\coloneqq \dot{\psi}_{1}(\boldsymbol{x})+l_{2}\psi_{1}(\boldsymbol{x}),
\end{split}
\end{equation}
\end{small}
where $\alpha_{1}(\cdot),\alpha_{2}(\cdot)$ are set as linear functions and $l_{1}>0, l_{2}>0.$ $r_{d_{0}}$ and $r_{d_{T}}$ denote the original radius and final radius respectively of the circle area in which we hope the robot to stay. The time-varying equation $r(t)=\sqrt{r_{d_{0}}^{2}-\frac{r_{d_{0}}^{2}-r_{d_{T}}^{2}}{T}t}$ denotes the radius of a contracting circle that decreases over time, guiding the robot into a smaller circular space until it reaches the desired area within time $T.$ The cost function for benchmark methods is $\min_{\boldsymbol{u}(t)} \int_{0}^{t_{r}} 
 \boldsymbol{u}(t)^{T}\boldsymbol{u}(t)dt.$

\subsection{Simulation Results}

In this subsection, we show how our AVCLBF method outperforms the time-varying CBF and HOCLBF in solving the Prob. \ref{prob:SACC-prob} with model \eqref{eq:UM-dynamics2} in MATLAB. We use ode45 to integrate the dynamic system for every $0.01s$ time-interval and quadprog to solve QP. Initial values of states are $x(0)=-2.5m,y(0)=0m,\theta(0)=0,v(0)=0.5m/s^{2},a_{1}(0)=1001.$ The control bound is $[-10,-5]^{T}\le \boldsymbol{u} \le [10,5]^{T}.$
We first assess AVCLBF's adaptivity to finite-time reachability by varying the target circle's radius. For safety, the robot must avoid two overlapping solid red circles and remain within a hollow red circle.
 The parameters related to HOCBFs are $x_{1,o}=x_{2,o}=y_{3,o}=0m, x_{3,o}=1m, y_{1,o}=-y_{2,o}=0.5m, r_{1,o}=r_{2,o}=1m, r_{3,o}=4.5m, k_{1}=k_{2}=1.$ For finite-time reachability requirements, the robot needs to reach hollow green circles within 5 seconds.
The parameters related to AVCLBFs are $x_{1,d}=x_{2,d}=x_{3,d}=3m, y_{1,d}=y_{2,d}=y_{3,d}=0m, r_{1,d}=1m,r_{2,d}=1.5m,r_{3,d}=0.5m.$
Other parameters are $l_{1}=l_{2}=1, W_{1}=1000, a_{1,w}=1000,\epsilon=10^{-10}.$ This is a particularly challenging example because the robot initially heads straight towards obstacles, and the area it needs to reach is directly to the right of the obstacles. In Fig. \ref{fig:diff radius}, the robot can safely enter the target area within 5 seconds, regardless of the radius of the area. Fig. \ref{fig:diff radius2} shows more details about state reachability over time represented by $h(\boldsymbol{x}).$ Note that we set $q=\frac{1}{4}, c(t)=5t-0.2$ for solid lines ($C_{\int}(t)=C_{\int}(0)+\frac{5}{2}t^{2}-0.2t$). To test the adaptivity of AVCLBF to hyperparameters, we first let $q$ take different values. 
  It is shown in Figs. \ref{fig:diff q1}, \ref{fig:diff q12} that the robot can always safely enter the target area within 5 seconds. Secondly, we change the the function of $c(t)$ into $c(t)=2t^{2}-2\ (C_{\int}(t)=C_{\int}(0)+\frac{2}{3}t^{3}-2t$), and it is shown in Figs. \ref{fig:diff radius}, \ref{fig:diff  radius2} that the robot can still satisfy safety and finite-time reachability requirements (depicted by dashed line).
  
We compare our proposed AVCLBFs with the state of the art time-varying CBFs and HOCLBFs. In Fig. \ref{fig:narrow passage}, the robot tries to drive through a narrow passage. The parameters are set as follows: for the map, $x_{1,o}=x_{2,o}=0m,y_{1,o}=-y_{2,o}=1m,r_{1,o}=r_{2,o}=1m;$ for AVCLBF, $k_{1}=k_{2}=1, l_{1}=0.5,l_{2}=1, q=\frac{2}{3}, c(t)=4t-1,$ other parameters are the same as before; for HOCLBF1, $k_{1}=k_{2}=1, c_{1}=c_{2}=5, q_{1}=q_{2}=\frac{1}{3};$ for HOCLBF2, $k_{1}=k_{2}=1, c_{1}=c_{2}=5, q_{1}=q_{2}=\frac{1}{5};$ for time-varying CBF, $k_{1}=k_{2}=1, l_{1}=l_{2}=0.5, T=5s,r_{d_{0}}=6m,r_{d_{T}}=r_{d}.$ $x_{d},y_{d},r_{d}$ for three methods are set as $3m, 0m, 1m$ respectively. Only AVCLBF enables the robot to navigate the narrow passage and meet all requirements, whereas benchmark methods, shown as dashed lines, fail midway, marked by crosses. Fig. \ref{fig:diff theta} examines how initial angles impact benchmark performance. For time-varying CBF, we change the initial angle of robot into $\theta(0)=0,\frac{\pi}{4},\frac{\pi}{2},$ the robot cannot find a feasible path to the target area (depicted by dashed black lines). For HOCLBF, we set $\theta(0)=0,\frac{\pi}{6},\frac{\pi}{4}$ for HOCLBF1, and $\theta(0)=0,-\frac{\pi}{6},-\frac{\pi}{4}$ for HOCLBF2. We see that the robot can only find a feasible path when the absolute values of angles are large (the cases are not very challenging). However for AVCLBF, we set $k_{1}=k_{2}=1, l_{1}=l_{2}=5,\theta(0)=0,q=\frac{1}{3},c(t)=1-3t+2t^2,$ and the robot finally finds the ideal path (solid blue trajectory). Moreover, we make $x_{3,o}=0.5m,y_{3,o}=0m$ and reduce the radius of the outer safety circle to $r_{3,o}=0.31m$ (dashed red circle)  to create a more constricted safe space, AVCLBF still works (dashed blue trajectory). Note that in this case, some parameters of AVCLBF are set the same as those of HOCLBF1. We conclude the proposed AVCLBF outperforms the benchmarks in terms of adaptivity and feasibility.

\begin{figure*}[t]
    \vspace{3mm}
    \centering
    \begin{subfigure}[t]{0.24\linewidth}
        \centering
        \includegraphics[width=1\linewidth]{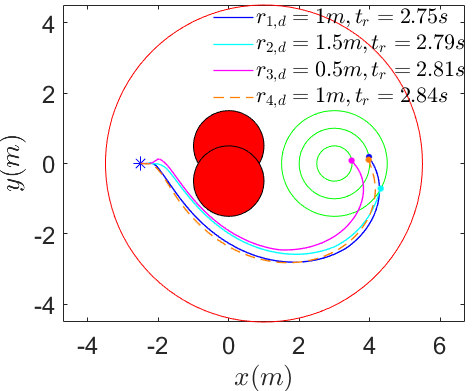}
        \caption{$q=\frac{1}{4},$ solid lines: $c(t)=5t-0.2,$ dashed line: $c(t)=2t^{2}-2.$}
        \label{fig:diff radius}
    \end{subfigure}
    \begin{subfigure}[t]{0.24\linewidth}
        \centering
        \includegraphics[width=1\linewidth]{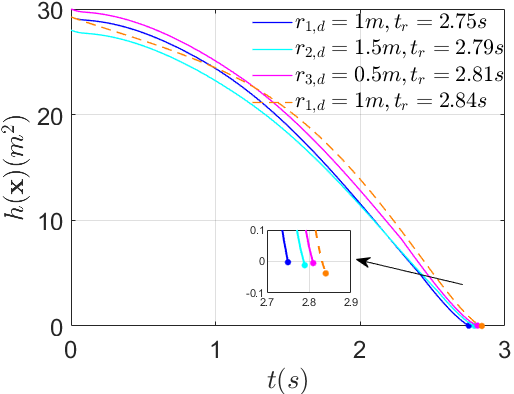}
        \caption{$q=\frac{1}{4},$ solid lines: $c(t)=5t-0.2,$ dashed line: $c(t)=2t^{2}-2.$}
        \label{fig:diff radius2}
    \end{subfigure}  
    \begin{subfigure}[t]{0.24\linewidth}
        \centering
        \includegraphics[width=1\linewidth]{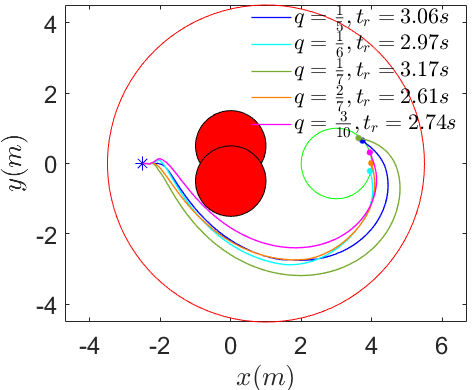}
        \caption{Solid lines: $c(t)=5t-0.2.$}
        \label{fig:diff q1}
    \end{subfigure}
    \begin{subfigure}[t]{0.24\linewidth}
        \centering
        \includegraphics[width=1\linewidth]{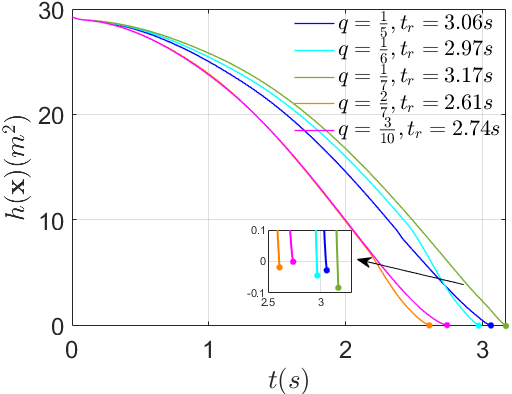}
        \caption{Solid lines: $c(t)=5t-0.2.$}
        \label{fig:diff q12}
    \end{subfigure}
    \caption{Closed-loop trajectories (in Fig. \ref{fig:diff radius}, Fig. \ref{fig:diff q1}) and curves of $h(\boldsymbol{x})$ over time (in Fig. \ref{fig:diff radius2}, Fig. \ref{fig:diff q12}) with controllers AVCLBF-HOCBF-QP: several safe closed-loop trajectories starting at the same location (depicted by blue asterisk) terminates within the target areas (depicted by green circles) before 5 seconds. Their termination time is represented by $t_{r}.$
    } 
    \label{fig:open-closed-loop}
\end{figure*}

\begin{figure}[ht]
    \centering
    \includegraphics[scale=0.38]{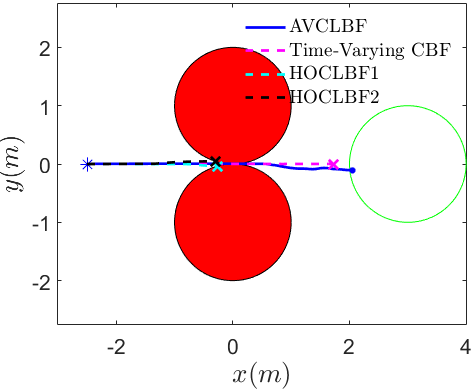}
    \caption{Robot controlled by AVCLBF (solid line) can pass the narrow passage and reach the green circle at $t_{r}=1.99s.$ Benchmark methods (dashed lines) fail at midway.}
    \label{fig:narrow passage}
\end{figure} 

\begin{figure}[ht]
    \centering
    \includegraphics[scale=0.38]{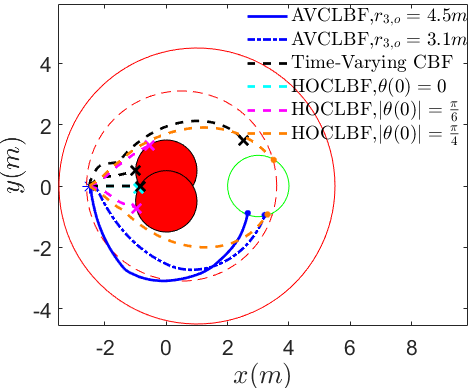}
    \caption{Robot controlled by AVCLBF (blue lines) can avoid obstacles and reach the green circle before 5 seconds ($t_{r}=2.67s$ for solid line and $t_{r}=3.31s$ for dashed line) given $\theta(0)=0.$ Benchmarks (lines except blue) fail at midway if $\left | \theta(0) \right | $ is small. The solid red circle confines all trajectories except the dashed blue one, while the dashed red circle confines only the dashed blue trajectory. }
    \label{fig:diff theta}
\end{figure} 

\section{Conclusion and Future Work}
\label{sec:Conclusion and Future Work}
We proposed Auxiliary-Variable Adaptive Control Lyapunov Barrier Functions (AVCLBFs) for the design of optimal controllers in safety-critical applications with spatio-temporal constraints. We showed that AVCLBFs are superior in terms of adaptivity and feasibility when compared with HOCLBFs and time-varying CBFs. We validated the proposed AVCLBFs approach by applying it to a model of a unicycle robot. Our proposed method generated a safe trajectory for the robot, which terminated at the target area within the specified time under numerous parameter settings and map configurations. One limitation is that the feasibility of the optimization and spatio-temporally constrained safety are not always guaranteed at the same time in the whole state space. We will address this limitation in future work by designing a feasibility-guaranteed AVCLBFs method. 
\bibliographystyle{IEEEtran}
\balance
\bibliography{references.bib}
\end{document}